

\documentclass[final,3p,times]{elsarticle}


\usepackage{amssymb}
\usepackage [latin1]{inputenc}
\usepackage{algorithm}
\usepackage{algorithmic}
\usepackage{multirow}
\usepackage{xcolor}
\usepackage{hyperref}
\usepackage{breakurl}
\usepackage{booktabs,threeparttable,caption2}
\usepackage{amsthm,amsmath}
\usepackage{float}
\usepackage{mathrsfs}
\usepackage{latexsym}
\usepackage{tabularx}
 \usepackage{diagbox}
 \usepackage{rotating}
\usepackage{cleveref}
 \biboptions{numbers,sort&compress}
\hypersetup{
    colorlinks=true,
    linkcolor=blue,
    filecolor=blue,
    urlcolor=blue,
    citecolor=cyan}
\newtheorem{thm}{Theorem}
\newtheorem{remark}{Remark}
\newtheorem{lem}{Lemma}

\newdefinition{example}{Experiment}
\newdefinition{rmk}{Remark}


\newcaptionstyle{left}{
\usecaptionmargin\captionfont
{\flushleft\bfseries\captionlabelfont\captionlabel\par}
\mbox{\onelinecaption{\captiontext}{\captiontext}}
}


\begin{document}

\begin{frontmatter}


\title{Efficient algorithm for the oscillatory matrix functions}
\author[author1]{Dongping Li\corref{cor1}}
\ead{lidp@ccsfu.edu.cn}
\author[author1]{Xue Wang}
\author[author1]{Xiuying Zhang\corref{cor1}}
\ead{xiuyingzhang@ccsfu.edu.cn}
\cortext[cor1]{Corresponding author.}
\address[author1]{Department of Mathematics, Changchun Normal University, Changchun 130032, PR China}

\begin{abstract}
This paper introduces an efficient algorithm for computing the general oscillatory matrix functions. These computations are crucial for solving second-order semi-linear initial value problems. The method is exploited using the scaling and restoring technique based on a quadruple angle formula in conjunction with a truncated Taylor series.
The choice of the scaling parameter and the degree of the Taylor polynomial relies on a forward error analysis.
Numerical experiments show that the new algorithm behaves in a stable fashion and performs well in both accuracy and efficiency.
\end{abstract}

\begin{keyword}
Oscillatory matrix functions; Quadruple angle formula; Scaling and restoring technique; Forward error analysis

\MSC[2010]  65F30 \sep 65F60 

\end{keyword}

\end{frontmatter}

\section{Introduction}
In this study, we propose a numerical method for approximating the oscillatory matrix functions defined by 
\begin{equation}\label{1.0a}
\phi_{l}(A)=\sum\limits^{\infty}_{k=0}\frac{(-1)^{k}A^k}{(2k+l)!},~~A\in \mathbb{R}^{N\times N},~~l\in \mathbb{N},
\end{equation}
which possess an infinite radius of convergence and are commonly denoted as $\phi$-functions.
These functions satisfy the recurrence relations
\begin{equation}\label{1.0b}
\phi_{l}(A)=\frac{1}{l!}-A\phi_{l+2}(A).
\end{equation}
Additionally, they can be reexpressed equivalently through the following integrals:
\begin{equation}\label{1.0d}
\phi_{l+1}(A)=\frac{1}{l!}\int_0^1\tau^l\phi_{0}\left((1-\tau)^2A\right)d\tau
\end{equation}
or
 \begin{equation}\label{1.0c}
\phi_{l+2}(A)=\frac{1}{l!}\int_0^1(1-\tau)\tau^l\phi_{1}\left((1-\tau)^{2}A\right)d\tau.
\end{equation}

Matrix functions of this class naturally emerge in the solution or numerical integration of second-order initial value
problem of the form
\begin{equation}\label{1.1}
\begin{cases}
y''(t)=-Ay(t)+f(t, y(t),y'(t)),~~ t\in[t_0,T],\\
y(t_0)=y_0,~~ y'(t_0)=y'_0.
\end{cases}
\end{equation}
Here, $y:\mathbb{R}\rightarrow \mathbb{R}^N$, $f:\mathbb{R}\times\mathbb{R}^N\times \mathbb{R}^N\rightarrow \mathbb{R}^N$. 
For instance, if $f\equiv 0$, the solution of problems (\ref{1.1}) can be expressed as:
\begin{equation}\label{1.2}
y(t)=\phi_{0}((t-t_0)^{2}A)y_0+(t-t_0)\phi_{1}((t-t_0)^{2}A)y'_0.
\end{equation}
More generally, under suitable assumptions on the smoothness of the right-hand side $f$, the exact solution of the system (\ref{1.1}) and its derivative are given by the variation of the constants formula \cite{grimm2006, Wu2015}
\begin{equation}\label{1.3}
\begin{cases}
y(t)=\phi_{0}((t-t_0)^{2}A)y_0+(t-t_0)\phi_{1}((t-t_0)^{2}A)y'_0+\int_{t_0}^{t}(t-\tau)\phi_{1}((t-\tau)^{2}A)f(\tau,y(\tau),y'(\tau))d\tau,\\
y'(t)=-(t-t_0)A\phi_{1}((t-t_0)^{2}A)y_0+\phi_{0}((t-t_0)^{2}A)y'_0+\int_{t_0}^{t}\phi_{0}((t-\tau)^{2}A)f(\tau,y(\tau),y'(\tau))dt.
\end{cases}
\end{equation}
In recent years, leveraging the variation of the constants formula (\ref{1.3}),  a broad family of structure-preserving numerical schemes have been exploited to compute the numerical solution of problem (\ref{1.1}). These methods involve more general oscillatory matrix functions (\ref{1.0a}) within their formulations. This is the main reason that we need to efficiently and accurately compute these functions. For an in-depth introduction and the latest advancements in structure-preserving methods for (\ref{1.1}), we direct readers to the monographs \cite{Wu2013,Wu2015}, and the references therein.

As shown above, accurate and efficient evaluation of oscillatory matrix functions is crucial for computing of the second-order initial value problem (\ref{1.1}). 
The first two matrix functions $\phi_0(A)$ and $\phi_1(A)$ can also be expressed in terms of the trigonometric matrix functions  $\texttt{sine}$ and $\texttt{cosine}$: 
\begin{equation}\label{1.4}
\begin{cases}
\phi_0(A)=\cos{(\sqrt{A})},\\
\phi_1(A)={(\sqrt{A})}^{-1}\sin{(\sqrt{A})}.
\end{cases}
\end{equation}
where $\sqrt{A}$ denotes any square root of $A$, see, e.g. \cite[Prob. 4.1]{Higham}. For singular $A$ this formula is interpreted by expanding ${(\sqrt{A})}^{-1}\sin{(\sqrt{A})}$ as a power series in $A$. 
Using (\ref{1.4}) and the double angle formulas of $\texttt{cosine}$ and $\texttt{sine}$, it is readily checked that the matrix functions $\phi_0(A)$ and $\phi_1(A)$ satisfy the relations:
\begin{equation}\label{1.5}
\begin{cases}
\phi_0(4A)=2\phi_0^2(A)-I,\\
\phi_1(4A)=\phi_0(A)\phi_1(A).
\end{cases}
\end{equation}

The computation of $\cos(A)$ and $\sin(A)$ has received significant research attention and several state-of-the-art algorithms have been provided in the numerical literature, see for instance \cite{Higham03,Higham05,Higham15,Sastre17,Sastre13,Alonso17,Defez19} and the references therein. Almost all the
widely used methods for computing the matrix trigonometric functions are the scaling and restoring technique combined with rational or polynomial approximations.
By employing the relation (\ref{1.4}), $\phi_0(A)$ and $\phi_1(A)$  can be computed through solving matrix
trigonometric functions. However, this approach necessitates explicitly calculating $\sqrt{A}$.

In \cite {AlMohy2017} Al-Mohy introduced an algorithm designed to compute the actions of $\phi_0(A)$ and $\phi_1(A)$ on vectors $B$. 
The algorithm first computes the scaled matrix functions $\phi_0(s^{-1}A)B$ and $\phi_1(s^{-1}A)B$ using truncated Taylor series, where $s$ is a nonnegative integer, and then applies a recursive procedure based on Chebyshev polynomials to restore the original matrix functions. 
In another work by Wu et al. \cite{Wu2020}  an algorithm based on quadrupling relations (\ref{1.5}) was developed to simultaneously compute  $\phi_0(A)$ and $\phi_1(A)$ . This algorithm approximates $\phi_0(4^{-s}A)$ and $\phi_1(4^{-s}A)$ using truncated Taylor approximations and then employs quadruple angle recurrence to recover the original matrix functions. 
Both algorithms rely on forward error analysis for parameters selection, with the former being suitable for large and sparse matrices, while the latter is more appropriate for medium and dense matrices.
To our knowledge, there have been few attempts to evaluate the more general cases thus far.
The aim of this paper is to propose a method for evaluating general $\phi$-functions. The method utilizes a scaling technique 
 based on a quadruple angle formula in conjunction with truncated Taylor approximations.
It determines both the scaling parameter and the Taylor degree through a forward error analysis. Numerical experiments demonstrate the reliability and effectiveness of the method.

The paper is structured as follows. In Section \ref{sec:2}, we introduce the algorithm for evaluating the general $\phi$-functions and provide a forward error analysis. The selection of the parameters involved is discussed in Section \ref{sec:3}. Section \ref{sec:4} presents numerical experiments to illustrate the performance of the algorithm. Finally, we draw some conclusions in Section \ref{sec:5}.

\section{Quadruple angle algorithm for $\phi_l(A)$}\label{sec:2}

When the norm of matrix $A$ is sufficiently small, the function $\phi_l(A)$ can be directly approximated using either Taylor or Pad\'{e} approximations. Nonetheless, such an approach become  impractical for matrices with large norms. This section presents the quadruple angle algorithm to compute $\phi_l(A)$. We begin our discussion by deriving a formula applicable to general $\phi$-functions.

\begin{lem} Given $A\in \mathbb{R}^{N\times N}$ and an integer~ $l\geq 2,$ then for any $a, b \in \mathbb{R},$ we have
\begin{equation}\label{2.2}
\begin{cases}
(a+b)^l\phi_l\left((a+b)^{2}A\right)=a^l\phi_{0}(b^{2}A)\phi_l\left(a^{2}A\right)+a^{l-1}b\phi_{1}(b^{2}A)\phi_{l-1}\left(a^{2}A\right)+\sum\limits^{l}_{k=2}\frac{1}{(l-k)!}a^{l-k}b^k \phi_k(b^2A),\\
(a+b)^{l-1}\phi_{l-1}\left((a+b)^{2}A\right)=-a^{l}bA\phi_{1}(b^{2}A)\phi_{l}\left(a^{2}A\right)+a^{l-1}\phi_{0}(b^2A)\phi_{l-1}\left(a^{2}A\right)+\sum\limits^{l}_{k=2}\frac{1}{(l-k)!}a^{l-k}b^{k-1}\phi_{k-1}(b^2A).
\end{cases}
\end{equation}
\end{lem}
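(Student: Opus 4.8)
The plan is to reduce the two matrix identities in (\ref{2.2}) to a pair of scalar identities and then establish them by a coupled induction on $l$. Since $\phi_k(a^2A)$, $\phi_k(b^2A)$ and $\phi_k((a+b)^2A)$ are all entire power series in the single matrix $A$, they commute, and each side of (\ref{2.2}) is a power series in $A$ with scalar coefficients depending only on $a,b,l$. Hence it suffices to prove the corresponding scalar identities for all real $A\neq 0$ (the value $A=0$ and the passage back to arbitrary matrices then follow by matching power-series coefficients); for scalar $A\neq 0$ we may divide by $A$ freely. It is convenient to abbreviate $g_l(t)=t^l\phi_l(t^2A)$, so that $g_0(t)=\phi_0(t^2A)$ and $g_1(t)=t\phi_1(t^2A)$, and to record two ingredients. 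First, from (\ref{1.4}) and the classical addition theorems for $\cos$ and $\sin$ one gets the base relations $g_0(a+b)=g_0(a)g_0(b)-A\,g_1(a)g_1(b)$ and $g_1(a+b)=g_1(a)g_0(b)+g_0(a)g_1(b)$. Second, the recurrence (\ref{1.0b}) rewrites as the $A$-lowering rule $g_l(t)=\frac{t^l}{l!}-A\,g_{l+2}(t)$, equivalently $A\,g_{l+2}(t)=\frac{t^l}{l!}-g_l(t)$.

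Rewritten through $g_l$, the two claims in (\ref{2.2}) become
\[
g_l(a+b)=g_0(b)g_l(a)+g_1(b)g_{l-1}(a)+\sum_{k=2}^{l}\frac{a^{l-k}}{(l-k)!}\,g_k(b)
\]
and
\[
g_{l-1}(a+b)=-A\,g_1(b)g_l(a)+g_0(b)g_{l-1}(a)+\sum_{k=2}^{l}\frac{a^{l-k}}{(l-k)!}\,g_{k-1}(b),
\]
which I denote $(\mathrm{I})_l$ and $(\mathrm{II})_l$. For $l=1$ the finite sums are empty and these reduce to exactly the two base relations above, so the induction is anchored there and the stated range $l\geq 2$ is covered a fortiori. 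I will prove $(\mathrm{I})_l$ and $(\mathrm{II})_l$ for all $l$ by a simultaneous induction whose step produces $(\mathrm{II})_{l+1}$ from $(\mathrm{I})_l$ and $(\mathrm{I})_{l+1}$ from $(\mathrm{II})_l$.

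For $(\mathrm{II})_{l+1}$ I start from its right-hand side, replace $A\,g_{l+1}(a)$ by $\frac{a^{l-1}}{(l-1)!}-g_{l-1}(a)$ using the lowering rule, reindex the finite sum by $j=k-1$, and watch the two resulting $\frac{a^{l-1}}{(l-1)!}g_1(b)$ terms cancel; what remains is precisely the right-hand side of $(\mathrm{I})_l$, which equals $g_l(a+b)$ by hypothesis. For $(\mathrm{I})_{l+1}$ I instead multiply the asserted identity by $A$, turn every $A\,g_{l+1}(a)$, $A\,g_k(b)$ and $A\,g_{l+1}(a+b)$ into lower-index terms via the lowering rule, and substitute $(\mathrm{II})_l$ for $g_{l-1}(a+b)$; after cancellation the only surviving obligation is $\sum_{k=2}^{l+1}\frac{a^{l+1-k}}{(l+1-k)!}\frac{b^{k-2}}{(k-2)!}=\frac{(a+b)^{l-1}}{(l-1)!}$, which is the binomial theorem. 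Having verified the $A$-multiplied identity, division by $A\neq 0$ returns $(\mathrm{I})_{l+1}$.

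The routine part is the reindexing of the correction sums $\sum_{k}\frac{a^{l-k}}{(l-k)!}g_{\bullet}(b)$, where careful bookkeeping of the shifted summation limits is needed so that all $g$-terms cancel and only the binomial remainder survives. The only genuine subtlety is the appearance of $A^{-1}$ in the step for $(\mathrm{I})_{l+1}$: I avoid it by arguing at the level of scalar power series in $A$, where one checks the $A$-multiplied identity and cancels the common factor $A$ (equivalently, the difference of the two sides is a power series annihilated by $A$, hence zero by a coefficient shift); the matrix statement, including singular $A$, then follows by coefficient matching.
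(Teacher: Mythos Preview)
Your argument is correct but follows a genuinely different route from the paper. The paper's proof is analytic: it interprets $(a+b)^l\phi_l((a+b)^2A)u$ and $(a+b)^{l-1}\phi_{l-1}((a+b)^2A)u$ as the solution and derivative at time $a+b$ of the inhomogeneous problem $y''+Ay=\frac{t^{l-2}}{(l-2)!}u$ with zero initial data, obtained directly from the variation-of-constants formula~(\ref{1.3}); it then recomputes the same solution by time-stepping, first to $t=a$ and then over $[a,a+b]$, and equates the two expressions. Your proof is purely algebraic: you reduce to scalar $A$, set $g_l(t)=t^l\phi_l(t^2A)$, anchor at $l=1$ via the cosine/sine addition laws, and run a coupled induction using only the lowering rule $Ag_{l+2}(t)=\frac{t^l}{l!}-g_l(t)$ and the binomial identity. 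The ODE approach is more conceptual---it exhibits the identity as a semigroup-type composition law for the flow of~(\ref{2.3})---and dispatches all $l\ge 2$ in one stroke without induction. Your approach is more elementary in that it never invokes~(\ref{1.3}) or the integral representations~(\ref{1.0c})--(\ref{1.0d}), relying only on~(\ref{1.0b}) and the $l=0,1$ addition formulas; the price is the bookkeeping you flag in the induction step and the need to argue carefully that the $A$-multiplied identity implies the original one (your power-series coefficient-shift remark handles this correctly, including singular $A$). Both arguments yield the quadruple angle formula~(\ref{2.7}) on setting $a=b$.
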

\begin{proof} For any $u\in \mathbb{R}^{N},$ from (\ref{1.3}) we observe that
the solution and its derivative of second-order initial value problem
\begin{equation}\label{2.3}
\begin{cases}
y''(t)+Ay(t)=\frac{t^{l-2}}{(l-2)!}u,\\
y(0)=0,~~ y'(0)=0
\end{cases}
\end{equation}
at time $a+b$ are
\begin{equation}\label{2.4}
\begin{cases}
y(a+b)=\frac{1}{(l-2)!}\int_{0}^{a+b}(a+b-t)t^{l-2}\phi_{1}\left((a+b-t)^{2}A\right)dt\cdot v=(a+b)^{l}\phi_{l}\left((a+b)^{2}A\right)v,\\
y'(a+b)=\frac{1}{(l-2)!}\int_{0}^{a+b}t^{l-2}\phi_{0}\left((a+b-t)^{2}A\right)dt\cdot v=(a+b)^{l-1}\phi_{l-1}\left((a+b)^2A\right)v.
\end{cases}
\end{equation}

Alternatively, we can express the solution of Equation (\ref{2.3}) at time $a+b$ by employing a time-stepping method, achieved through dividing the time interval $[0, a+b]$ into two subintervals $[0, a]$ and $[a, a+b]$. At time $a,$ the solution and its derivative are 
\begin{equation}\label{2.4a}
\begin{cases}
y(a)=a^l\phi_l\left(a^{2}A\right)u,\\
y'(a)=a^{l-1}\phi_{l-1}\left(a^{2}A\right)u.
\end{cases}
\end{equation}
To advance the solution, utilizing $y(a)$, $y'(a)$ as initial value and again applying the formula (\ref{1.3}), we arrive at
\begin{equation}\label{2.5}
\begin{cases}
y(a+b)=\phi_{0}(b^{2}A)y(a)+b\phi_{1}(b^{2}A)y'(a)+\frac{1}{(l-2)!}\int_{a}^{a+b}t^{l-2}(a+b-t)\phi_{1}((a+b-t)^{2}A)udt,\\
y'(a+b)=-bA\phi_{1}(b^{2}A)y(a)+\phi_{0}(b^2A)y'(a)+\frac{1}{(l-2)!}\int_{a}^{a+b}t^{l-2}\phi_{0}((a+b-t)^{2}A)udt.
\end{cases}
\end{equation}
Substituting (\ref{2.4a}) into (\ref{2.5}) and  performing the integration by substitution for the integral in (\ref{2.5}), we obtain
\begin{equation}\label{2.6}
\begin{cases}
y(a+b)=a^l\phi_{0}(b^{2}A)\phi_l\left(a^{2}A\right)u+a^{l-1}b\phi_{1}(b^{2}A)\phi_{l-1}\left(a^{2}A\right)u+\sum\limits^{l}_{k=2}\frac{1}{(l-k)!}a^{l-k}b^k \phi_k(b^2A)u,\\
y'(a+b)=-a^lbA\phi_{1}(b^{2}A)\phi_{l}\left(a^{2}A\right)u+a^{l-1}\phi_{0}(b^2A)\phi_{l-1}\left(a^{2}A\right)u+\sum\limits^{l}_{k=2}\frac{1}{(l-k)!}a^{l-k}b^{k-1}\phi_{k-1}(b^2A)u.
\end{cases}
\end{equation}
By equalizing the expression (\ref{2.4}) with (\ref{2.6}), we directly establish the claim.
\end{proof}
In particular, the lemma yields the following quadruple angle formula
\begin{eqnarray}\label{2.7}
\phi_l(4A)=\frac{1}{2^l}\left(\phi_{0}(A)\phi_l(A)+\phi_{1}(A)\phi_{l-1}(A)+\sum\limits^{l}_{k=2}\frac{1}{(l-k)!}\phi_k(A)\right), ~~l\geq2. 
\end{eqnarray}
This formula forms the foundation of algorithm.

Let $s$ be a non-negative integer. We define  
\begin{eqnarray}\label{2.8}
X:=4^{-s}A~~\text{and}~~ C_{k,i}:=\phi_k(4^iX),~~k=0,1,\ldots,l,~~i=0,1,\ldots,s.
\end{eqnarray}
Utilizing the quadruple angle formula (\ref{2.7}) and starting with $C_{k,0},~k=0,1,\ldots,l$, we can compute $C_{l,s}=\phi_l(A)$ through the recurrence relation
\begin{equation}\label{2.9} 
C_{k,i}=\begin{cases} 2C_{0,i-1}^2-I, & k=0,\\
C_{0,i-1}C_{1,i-1}, & k=1,\\
\frac{1}{2^k}\left(C_{0,i-1}C_{k,i-1}+C_{1,i-1}C_{k-1,i-1}+\sum\limits_{j=2}^{k}\frac{1}{(k-j)!}C_{j,i-1}\right), & 1\leq k\leq 2
\end{cases} 
\end{equation}
for $i=1,2,\ldots,s$. 

We subsequently derive an absolute error bound for
this quadruple angle recurrence. Let $\widehat{C}_{k,0}$ denote an approximation of $C_{k,0}$, and $\widehat{C}_{k,i}$ for $1\leq i\leq s$ be generated from $\widehat{C}_{k,0}$ using the same recurrence as in (\ref{2.9}), namely,
\begin{equation}\label{2.10} 
\widehat{C}_{k,i}=\begin{cases} 
2{{\widehat{C}_{0,i-1}}}^2-I, & k=0,\\
 {\widehat{C}_{0,i-1}}{\widehat{C}_{1,i-1}}, & k=1,\\ \frac{1}{2^k}\left(\widehat{C}_{0,i-1}\widehat{C}_{k,i-1}+\widehat{C}_{1,i-1}\widehat{C}_{k-1,i-1}+\sum\limits_{j=2}^{k}\frac{1}{(k-j)!}\widehat{C}_{j,i-1}\right), & 2\leq k\leq l. \end{cases} 
 \end{equation}
Our objective is to establish bounds for the errors $E_{k,i}=C_{k,i}-\widehat{C}_{k,i}$ for all relevant $k$ and $i$.

\begin{thm}\label{th1} Let $E_{k,i}=\widehat{C}_{k,i}-C_{k,i}$, where $C_{k,0}=\phi_k(4^{-s}A),$ $\widehat{C}_{k,0}$ is an approximation of $C_{k,0}$, and $C_{k,i}$ and $\widehat{C}_{k,i}$ satisfy $(\ref{2.9})$ and  $(\ref{2.10})$, respectively. Assuming that $\|E_{k,i}\|\leq 0.05\|C_{k,i}\|$ for $ k=0,1$, then the errors can be bounded by

\begin{equation}\label{2.10c}
\|{E}_{k,i}\|\leq \begin{cases} 
4.1^{i} \prod \limits^{i-1}_{j=0} \max\limits_{0\leq \iota\leq k}\{\|C_{\iota,j}\|\}\cdot\max\limits_{0\leq \iota\leq k} \{\|E_{\iota,0}\|\}, & k=0,1,\\
\prod \limits^{i-1}_{j=0} \left(4.1 \max\limits_{0\leq \iota\leq k}\{\|C_{\iota,j}\|\}+0.25\right)\cdot\max\limits_{0\leq \iota\leq k} \{\|E_{\iota,0}\|\},&k\geq2.
 \end{cases} 
\end{equation}
\end{thm}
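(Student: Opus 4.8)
The plan is to reduce everything to two scalar envelope quantities, $M_{k,j}:=\max_{0\le\iota\le k}\|C_{\iota,j}\|$ and $F_{k,i}:=\max_{0\le\iota\le k}\|E_{\iota,i}\|$, and then run an induction on the recursion level $i$. This is the natural packaging because the target inequalities are exactly statements about $F_{k,i}$ (since $\|E_{k,i}\|\le F_{k,i}$ and $F_{k,0}=\max_{0\le\iota\le k}\|E_{\iota,0}\|$), and because the right-hand sides already contain the maxima $\max_{0\le\iota\le k}$. First I would subtract the exact recurrence $(\ref{2.9})$ from its computed counterpart $(\ref{2.10})$ and linearise every product by the elementary identity $\widehat X\widehat Y-XY=\widehat X(\widehat Y-Y)+(\widehat X-X)Y$. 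This converts each of the three branches into an error recurrence that is linear in the $E_{\iota,i-1}$, with coefficients assembled from $C_{\iota,i-1}$ and $\widehat C_{\iota,i-1}$. The hypothesis $\|E_{k,i}\|\le0.05\|C_{k,i}\|$ for $k=0,1$ enters precisely here, through $\|\widehat C_{0,i-1}\|\le1.05\|C_{0,i-1}\|$ and $\|\widehat C_{1,i-1}\|\le1.05\|C_{1,i-1}\|$, and it is this constant $1.05$ that generates the constant $4.1$ appearing throughout.

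For $k\in\{0,1\}$ the linearisation gives $E_{0,i}=2\big(\widehat C_{0,i-1}E_{0,i-1}+E_{0,i-1}C_{0,i-1}\big)$ and $E_{1,i}=\widehat C_{0,i-1}E_{1,i-1}+E_{0,i-1}C_{1,i-1}$. Taking norms, bounding each $\|C_{\iota,i-1}\|$ by $M_{1,i-1}$ and each $\|E_{\iota,i-1}\|$ by $F_{1,i-1}$, yields $\|E_{0,i}\|\le 4.1\,M_{1,i-1}F_{1,i-1}$ (with $4.1=2(1.05+1)$) and $\|E_{1,i}\|\le 2.05\,M_{1,i-1}F_{1,i-1}$, whence $F_{1,i}\le 4.1\,M_{1,i-1}F_{1,i-1}$ and, for the $k=0$ component alone, the sharper $F_{0,i}\le 4.1\,M_{0,i-1}F_{0,i-1}$. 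Telescoping this one-step contraction in $i$ produces the factor $4.1^{i}\prod_{j=0}^{i-1}M_{k,j}$, which is the first case.

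The substantive case is $k\ge2$. Linearising the third branch gives
\[
E_{k,i}=\tfrac{1}{2^{k}}\Big(\widehat C_{0,i-1}E_{k,i-1}+E_{0,i-1}C_{k,i-1}+\widehat C_{1,i-1}E_{k-1,i-1}+E_{1,i-1}C_{k-1,i-1}+\textstyle\sum_{j=2}^{k}\tfrac{1}{(k-j)!}E_{j,i-1}\Big).
\]
Bounding the four product terms as before, their norms sum to $4.1\,M_{k,i-1}F_{k,i-1}$, while the tail contributes $F_{k,i-1}\sum_{m=0}^{k-2}\frac{1}{m!}$ after the reindexing $m=k-j$. Dividing by $2^{k}$, using $2^{-k}\le1$ on the product part and the uniform estimate $2^{-k}\sum_{m=0}^{k-2}\frac{1}{m!}\le0.25$ on the tail, delivers the one-step bound $\|E_{k,i}\|\le(4.1\,M_{k,i-1}+0.25)F_{k,i-1}$. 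I would then verify that the \emph{same} one-step bound holds for every index $\iota\le k$: the cases $\iota=0,1$ are already dominated by $4.1\,M_{k,i-1}F_{k,i-1}$, and the cases $2\le\iota\le k$ follow from the monotonicity $M_{\iota,i-1}\le M_{k,i-1}$ and $F_{\iota,i-1}\le F_{k,i-1}$. Hence $F_{k,i}\le(4.1\,M_{k,i-1}+0.25)F_{k,i-1}$, and telescoping in $i$ produces $\prod_{j=0}^{i-1}(4.1\,M_{k,j}+0.25)$, finishing the proof.

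The step I expect to be the real obstacle is the uniform tail estimate $2^{-k}\sum_{m=0}^{k-2}\frac{1}{m!}\le0.25$: the partial sums climb toward $e$, so one must confirm that the geometric factor $2^{-k}$ dominates, with equality attained exactly at $k=2$ and $k=3$ (both give $1/4$) and strict decay thereafter; securing the clean constant $0.25$ rather than the crude $e/4$ is what makes the advertised bound tight. The only other delicate point is the bookkeeping of the simultaneous induction, namely phrasing the contraction for the envelope $F_{k,i}$ — which folds in all lower-index errors — rather than for $\|E_{k,i}\|$ in isolation, since it is precisely this formulation that allows the single-step inequality to telescope into the stated product.
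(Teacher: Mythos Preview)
Your proposal is correct and follows essentially the same route as the paper: subtracting the recurrences, linearising the products, using the hypothesis to replace $\|\widehat C_{0,i-1}\|,\|\widehat C_{1,i-1}\|$ by $1.05\|C_{0,i-1}\|,1.05\|C_{1,i-1}\|$, and then telescoping a one-step contraction on the maximum error. The only cosmetic difference is that the paper packages the componentwise inequalities into a matrix recursion $e_{k,i+1}\le\Psi_{k,i}e_{k,i}$ and reads off $\|\Psi_{k,i}\|_\infty\le 4.1\,c_{k,i}^\star+0.25$, whereas you argue directly with the scalar envelope $F_{k,i}=\max_{0\le\iota\le k}\|E_{\iota,i}\|=\|e_{k,i}\|_\infty$; your case check over $\iota\le k$ is precisely the row-sum computation for that infinity norm.
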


\begin{proof} Subtracting (\ref{2.9}) from (\ref{2.10}) gives the error recursion
\begin{equation}\label{2.11}
E_{k,i+1}=
\begin{cases}
2(C_{0,i}E_{0,i}+E_{0,i}C_{0,i}+E_{0,i}^2),&k=0,\\
C_{0,i}E_{1,i}+E_{0,i}C_{1,i}+E_{0,i}E_{1,i},&k=1,\\
\frac{1}{2^k}\left(C_{0,i}E_{k,i}+E_{0,i}C_{k,i}+C_{1,i}E_{k-1,i}+E_{1,i}C_{k-1,i}+E_{0,i}E_{k,i}+E_{1,i}E_{k-1,i}+\sum\limits^{k}_{j=2}\frac{1}{(k-j)!}E_{j,i}\right),&2\leq k\leq l.
\end{cases}
\end{equation}
Taking the norms of both sides of (\ref{2.11}), by the assumption on $\|E_{k,i}\|$ we obtain
\begin{equation}\label{2.12}
\|E_{k,i+1}\|\leq\begin{cases}
4.1\|C_{0,i}\|\cdot \|E_{0,i}\|,&k=0,\\
\|C_{1,i}\|\cdot \|E_{0,i}\|+1.05\|C_{0,i}\|\cdot \|E_{1,i}\|,&k=1,\\
\frac{1}{2^k}\left(\|C_{k,i}\|\cdot\|E_{0,i}\|+\|C_{k-1,i}\|\cdot\|E_{1,i}\|+1.05\|C_{1,i}\|\cdot\|E_{k-1,i}\|+1.05\|C_{0,i}\| \cdot\|E_{k,i}\|+\sum\limits^{k}_{j=2}\frac{1}{(k-j)!}\|E_{j,i}\|\right),& 2\leq k\leq l.
\end{cases}
\end{equation}
Define $c_{k,i}^\star=\max\limits_{0\leq j\leq k}\{\|C_{j,i}\|\}$ and $e_{k,i}=[\|E_{0,i}\|, \|E_{1,i}\|, \ldots,\|E_{k,i}\|]^T$. We have
\begin{equation}\label{2.13}
e_{k,i+1}\leq \Psi_{k,i} e_{k,i},
\end{equation}
where
\begin{equation}\label{2.14}
\Psi_{k,i}=\Theta_k(U_{k,i}+V_k)\in \mathbb{R}^{(k+1)\times (k+1)},
\end{equation}
with
\begin{equation}\label{2.15}
\Theta_k=\text{diag}(1,1,\frac{1}{2^2},\ldots,\frac{1}{2^k})\in \mathbb{R}^{(k+1)\times (k+1)},
\end{equation}
and
\begin{eqnarray}\label{2.16}
U_{k,i}=c_{k,i}^\star\left(
\begin{tabular}{cccccccc}
$4.1$ &$0$ &$0$ &$0$ &$0$  &$\cdots$ &$0$\\
$1$ &$1.05$ &$0$ &$0$ &$0$  &$\cdots$ &$0$ \\
$1$ &$2.05$ &$1.05$ &$0$ &$0$  &$\cdots$ &$0$\\
$1$ &$1$ &$1.05$ &$1.05$ &$0$  &$\cdots$ &$0$\\
$1$ &$1$ &$0$ &$1.05$ &$1.05$  &$\cdots$ &$0$\\
$\vdots$ &$\vdots$ &$\vdots$ &$\vdots$ &$\ddots$ &$\ddots$& $\vdots$\\
$1$ &$1$ &$0$ &0  &0 &$\cdots$ & $1.05$
\end{tabular}%
\right), V_k=\left(
\begin{tabular}{cccccccc}
$0$ &$0$ &$0$ &$0$ &$0$  &$\cdots$ &$0$\\
$0$ &$0$ &$0$ &$0$ &$0$  &$\cdots$ &$0$ \\
$0$ &$0$ &$1$ &$0$ &$0$  &$\cdots$ &$0$\\
$0$ &$0$ &$1$ &$1$ &$0$  &$\cdots$ &$0$\\
$0$ &$0$ &$\frac{1}{2!}$ &$1$ &$1$  &$\cdots$ &$0$\\
$\vdots$ &$\vdots$ &$\vdots$ &$\vdots$ &$\ddots$ &$\ddots$& $\vdots$\\
$0$ &$0$ &$\frac{1}{(k-2)!}$ & $\frac{1}{(k-3)!}$  &$\frac{1}{(k-1)!}$  &$\cdots$ & $1$
\end{tabular}%
\right).
\end{eqnarray}
From the recursion (\ref{2.13}) we have
\begin{eqnarray}\label{2.17}
\|e_{k,i+1}\|_{\infty}\leq \prod\limits^{i}_{j=0}\|\Psi_{k,j}\|_{\infty} \|e_{k,0}\|_{\infty}.
\end{eqnarray}
Notice that $\|\Psi_{k,j}\|_{\infty}=4.1c_{k,j}^\star$ for $k=0,1$, and $\|\Psi_{k,j}\|_{\infty}\leq 4.1c_{k,j}^\star+0.25$ for $k\geq 2$, this yields the required conclusion.
\end{proof}
\begin{remark}
It follows directly from (\ref{1.0d}) and (\ref{1.4}) that  $|\phi_k(x)|\leq \frac{1}{k!}$ for any non-negative real number $x$.
Thus, in the special case where the matrix $A$ is positive semi-definite matrix, we have that $\|C_{k,j}\|_2\leq 1$, and the bound (\ref{2.10c}) reduces to
\begin{equation}\label{2.17b}
\|{E}_{k,s}\|_2 \leq\begin{cases}
4.1^s \max\limits_{0\leq \iota\leq k} \{\|E_{\iota,0}\|_2\}, &k= 0, 1,\\
4.35^s\max\limits_{0\leq \iota\leq k} \{\|E_{\iota,0}\|_2\}, & k \geq 2.
\end{cases}
\end{equation}
Although the error bound may become considerable for large values of $s$, it remains rigorous. Consequently, if the error bound is sufficiently small, it ensures that the actual error is also small, often improving by multiple orders of magnitude. 
\end{remark}
\begin{remark}
Assuming that $\phi_{0}(4^{-s}A)$ and $\phi_1(4^{-s}A)$ can be computed exactly, though this assumption is practically infeasible from a numerical point of view, a proof similar to that in Theorem \ref{th1} reveals that the error bound to be
\begin{equation}\label{2.17d}
\|{E}_{k,i}\|\leq 
 \begin{cases} 
0, & k=0,1,\\
(\frac{1}{4})^i\prod \limits^{i-1}_{j=0} \left( \max\limits_{0\leq \iota\leq 1}\{\|C_{\iota,j}\|\}+1\right)\cdot\max\limits_{2\leq \iota\leq k} \{\|E_{\iota,0}\|\}, &k\geq2.
\end{cases}
\end{equation}
\end{remark}
To develop an algorithm, it is required to pre-evaluate $\phi_j(4^{-s}A)$ for $j=0,1,\cdots,l$. When the norm of $4^{-s}A$ is sufficiently small, rational and polynomial approximations can be used to compute such matrix functions. Recent studies have indicated that Taylor-based approximations may exhibit higher efficiency compared to approximations \cite{Sastre17,Ruiz16}. Therefore, within this framework, we opt for Taylor-based approximations to compute the $\phi_j(4^{-s}A)$. 

Denote
\begin{eqnarray}\label{2.18a}
T_{j,m}(x):=\sum\limits^{m}_{k=0}\frac{(-1)^k}{(2k+j)!}x^k,~~ j=0,1,\cdots,l
\end{eqnarray}
as the truncated Taylor series of order $m$ for the function $\phi_j(x)$. 
The nonnegative integer $s$ is chosen such that $\phi_j(4^{-s}A)$ is well-approximated by $\widehat{C}_j:=T_{j,m}(4^{-s}A)$.
By applying the quadruple angle formula $s$ times iteratively, one can derive approximations to $\phi_j(A)$ for $j=0,1,\cdots,l$. Algorithm \ref{alg1.1} provides a concise outline of the procedure for computing general oscillatory matrix functions.

The matrix polynomials $T_{j,m}(4^{-s}A)$ for $j=0,1,\ldots,l$ can be computed using the Paterson-Stockmeyer (\texttt{PS}) method \cite[p. 72-74]{Higham}, \cite{Paterson}, which is a widely used general technique for evaluating matrix polynomials. Our tests indicate that the \texttt{PS} method attains higher  accuracy than the explicit powers method \cite[Algorithm 4.3]{Higham}, although the latter may involve fewer matrix-matrix products when simultaneously computing all the $l + 1$ matrix polynomials. To fully exploit the performance of the \texttt{PS} method, as illustrated in \cite[p. 74]{Higham}, we constrain the polynomial degree $m$ to the optimal set 
\begin{eqnarray*}
\mathbb{M}=\{2, 4, 6, 9, 12, 16, 20, 25, 30, 36,\ldots\}.
\end{eqnarray*}
Algorithm \ref{alg1.0} presents the pseudocode for applying the \texttt{PS} methodology to compute  $T_{j,m}(4^{-s}A)$ for $j=0,1,\ldots,l$. The process entails $\pi_m=\left(\lceil\sqrt {m}~\rceil-1\right)+(l+1)\left ( m/\lceil\sqrt {m}~\rceil-1\right)$ matrix-matrix products. Initially, the algorithm computes powers $A^i$ for $2\leq i\leq q$, which can be done during the parameter selection phase (Step 1 of Algorithm \ref{alg1.1}). Further details will be elucidated in the ensuing section. Excluding the calculation of $A^i$ for $2\leq i\leq q$, the procedure is amenable to parallel implementation. 

\begin{algorithm}[htb]
\caption{~$\texttt{quadphi}$: the quadruple angle algorithm for computing $\phi_l(A)$, $j=0,1,2, \ldots,l$.}\label{alg1.1}
\begin{algorithmic}[1]
\REQUIRE $A \in \mathbb{C}^{N\times N},$ $l$
\STATE Select optimal values of $m$ and $s$
\STATE $X=4^{-s}A$
\STATE Compute $\widehat{C}_j=T_{j,m}(X)$, $j=0,1,2, \ldots,l$
\IF {$s=0$} \RETURN $\widehat{C}_j$, $j=0,1,2, \ldots,l$ \ENDIF
\FOR{$i=1:s$}
\IF {$l=0$} \RETURN $\widehat{C}_j$, $j=0,1,2, \ldots,l$ \ENDIF
\STATE Compute $\widehat{C}_0=2{\widehat{C}_0}^2-I$
\STATE Compute $\widehat{C}_1=\widehat{C}_0\widehat{C}_1$
\STATE Compute $\widehat{C}_k=\frac{1}{2^k} \left(\widehat{C}_0\widehat{C}_k+\widehat{C}_1\widehat{C}_{k-1}+\sum\limits^{k}_{j=2}\frac{1}{(k-j)!}\widehat{C}_j\right),~k=2,\ldots,l$
\ENDFOR
\ENSURE~$\widehat{C}_k$,  $k=0,1,\ldots,l$
\end{algorithmic}
\end{algorithm}

\begin{algorithm}[htb]
\caption{ The \texttt{PS} method for computing matrix polynomials $T_{j,m}(A)$ for $j=0,1,2, \ldots,l$.}\label{alg1.0}
\begin{algorithmic}[1]
\REQUIRE $ A\in \mathbb{C}^{N\times N},$ $l$, $m\in\mathbb{M}=\{2, 4, 6, 9, 12, 16, 20, 25,\ldots\}$
\STATE $q= \lceil\sqrt{m}~\rceil$,  $r= \lfloor m/q \rfloor$
\STATE Compute $A_i=A^i,~~i=1,2,\ldots,q$
\STATE Compute $T_{j,m}=\sum\limits^q_{i=0}\frac{1}{\left(2(m-q+i)+j\right)!}A_i$, $j=0,1,\ldots,l$
\FOR{$k=r-2:0$}
\STATE Compute $T_{j,m}=T_{j,m}A_q+\sum\limits^{q-1}_{i=0}\frac{1}{\left(2(qk+i)+j\right)!} A_i$, $j=0,1,\ldots,l$
\ENDFOR
\ENSURE~$T_{j,m}$, $j=0,1,\ldots,l$
\end{algorithmic}
\end{algorithm}

\section{Determination of the scaling parameter $s$ and the Taylor degree $m$ }\label{sec:3}
Next, we address the selection of the scaling parameter $s$ and the Taylor degree  $m$. For a given tolerance $\texttt{Tol}$, 
the scaling parameter $s$ and the Taylor degree $m$ should be chosen to satisfy the condition: 
\begin{equation}
\|\phi_j(X)-T_{j,m}(X)\|=\|\sum\limits^{\infty}_{k=m+1}\frac{1}{(2k+j)!}X^k\| \leq \texttt{Tol}, ~~j=0,1,\ldots,l.
\end{equation}
Furthermore, let we define the function 
\begin{equation}\label{3.1}
 h_{m}(\theta):=\sum\limits^{\infty}_{k=m+1}\frac{1}{(2k)!}\theta^k.
\end{equation}
According to the theorem  presented in \cite[Thm. 4.2(a)]{AlMohy2009},  we have
\begin{equation}\label{3.2}
\|\phi_j(X)- T_{j,m}(X)\|\leq\sum\limits^{\infty}_{k=m+1}\frac{1}{(2k)!}\|X^k\|\leq h_m\left(\alpha_p(X)\right),
\end{equation}
where $\alpha_p(X)=\max\{\|X^p\|^{1/p}$, $\|X^{p+1}\|^{1/(p+1)}\}$, and $p(p-1)\leq m+1$.

Define $\eta_m(X)= \min\{\alpha_p(X)~|~ p(p-1)\leq m + 1\}$ and let $\theta_{m}$ denote the largest value of $\theta$ such that the bound in (\ref{3.2}) does not exceed the tolerance  $\texttt{Tol}$, i.e.,
\begin{equation}\label{3.3}
\theta_{m}=\max{\{\theta~|~{h_{m}(\theta)}\leq \texttt{Tol}\}}.
\end{equation}
Thus, once the scaling parameter $s$ is selected to satisfy
\begin{equation}\label{3.4}
\eta_m(X)\leq \theta_{m},
\end{equation}
we have
\begin{eqnarray}\label{3.5}
\|\phi_j(X)- T_{j,m}(X)\|\leq \texttt{Tol}.
\end{eqnarray}
 From (\ref{3.4}) we have
\begin{equation}\label{3.6}
s\geq\log_4\left(\eta_m(A)/\theta_{m}\right).
\end{equation}
Naturally, we choose the smallest non-negative integer $s$ such that the inequality (\ref{3.6}) holds. 

In practice, the value of $\theta_{m}$ can be evaluated by substituting the $h_{m}(\theta)$ with its first $\nu$-terms truncated series and then solving numerically the algebra equation
\begin{equation}\label{3.7}
\sum\limits^{\nu+m}_{k=m+1}\frac{1}{(2k)!}\theta^k=\texttt{Tol}.
\end{equation}
Table \ref{tab2.1} lists the evaluations of $\theta_{m}$ for $m=1:20$ when $\nu=150$ and $\texttt{Tol}=2^{-53}\approx1.1\cdot10^{-16}$. 

\begin{table}[h]
\setlength{\abovecaptionskip}{0.cm}
\setlength{\belowcaptionskip}{-0.3cm}
\caption{The first 20 values of $\theta_{m}$ satisfy (\ref{3.7}) when $\texttt{Tol}=2^{-53}$.}\label{tab2.1}
\begin{center}
\begin{tabular*}{\textwidth}{@{\extracolsep{\fill}}@{~}c|ccccccccccccc}
\toprule%
{$m$} & $1$& $2$& $3$& $4$& $5$& $6$& $7$ & $8$ & $9$ & $10$\\
  \hline
  $\theta_{m}$ & $5.16\text{e-}8$ & $4.31\text{e-}5$ & $1.45\text{e-}3$ &$1.32\text{e-}2$ &$6.13\text{e-}2$&$1.92\text{e-}1$&$4.68\text{e-}1$
  &$9.63\text{e-}1$ &$1.75$ &$2.90$\\
 \toprule%
  {$m$}& $11$& $12$& $13$ & $14$& $15$& $16$& $17$& $18$& $19$& $20$ \\
  \hline
  $\theta_{m}$&$4.50$ &$6.59$&$9.25$ & $12.52$ & $16.45$ & $21.09$ &$26.46$&$32.61$
  &$39.57$ &$47.35$ \\
\bottomrule
\end{tabular*}
\end{center}
\end{table}
Here, we present a specific approach for determining the scaling parameter $s$ and the Taylor degree $m$.
First, we sequentially select the values of $m$ from the set $M=\{2, 4, 6, 9, 12, 16, 20\}$. If there exists an $m$ such that $\eta_m(A)\leq \theta_m$, we set $s = 0$;
 Otherwise, if $\eta_{20} > \theta_{20}$,  we set $m = 20$ and $s=\lceil\log_4\left(\eta_{20}/\theta_{20}\right)\rceil$. 
The pseudocode of this process is summarized in Algorithm \ref{alg2}. Within this algorithm, 
to avoid computing additional matrix powers, we bound some $\|A^i\|$ from the products of norms of matrices that have been
previously computed. For instance,  we use $\min(\|A\|\cdot\|A^4\|,\|A^2\|\cdot\|A^3\|)$ to bound $\|A^5\|$. 
Integrating Algorithm \ref{alg1.0} and Algorithm \ref{alg2} into Algorithm \ref{alg1.1} enables the evaluation of $\phi_{\ell}(A)$.

\begin{algorithm}[htb]
\caption{~This algorithm returns the parameters $m,~s$, and computes the powers $A_i=A^i/4^{is}$, $1\leq i\leq q$.}
\label{alg2}
\begin{algorithmic}[1]
\REQUIRE~$A\in \mathbb{R}^{N\times N},$ $l$.
\STATE $s=0$ 
\STATE $A_1=A,$ $d_1=\|A_1\|_1$
\STATE  if {$d_1\leq\theta_1$,} $m=1$, quit, end if
\STATE $A_2=A^2,$ $d_2=\|A_2\|_1$.
\STATE $\alpha_2=\max(d_2^{1/2},(d_1*d_2)^{1/3}),$ $\eta=\alpha_2$
\STATE if\ {$\eta<=\theta_2,$} $m=2,$ quit, end if
\STATE if\ {$\eta<=\theta_4,$} $m=4,$ quit, end if
\STATE $A_3=A_1 A_2,$ $d_3=\|A_3\|_1,$ $d_4=\min(d_1d_3,{d_2}^2),$ $\alpha_2=\max(d_2^{1/2},d_3^{1/3}),$ $\alpha_3=\max(d_3^{1/3},d_4^{1/4})$
\STATE $\eta=\min(\alpha_2,\alpha_3).$
\STATE if\ {$\eta<=\theta_6,$} $m=6,$ quit, end if
\STATE if\ {$\eta<=\theta_9,$} $m=9,$ quit, end if
\STATE $A_4=A_2^2,$ $d_4=\|A_4\|_1,$ $d_5=\min(d_1d_4,d_2d_3),$ $\alpha_3=\max(d_3^{1/3},d_4^{1/4}),$ $\alpha_4=\max(d_4^{1/4},d_5^{1/5})$
\STATE $\eta=\min(\alpha_2,\alpha_3,\alpha_4)$
\STATE if\ {$\eta<=\theta_{12},$} $m=12,$ quit, end if
\STATE if\ {$\eta<=\theta_{16},$} $m=16,$ quit, end if
\STATE $A_5=A_1 A_4$, $d_5=\|A_5\|_1$,  $d_6=\min([d_1*d_5,d_2*d_4,d_3^2])$,  $\alpha_4=\max([d_4^{1/4},d_5^{1/5}])$, $\alpha_5=\max([d_5^{1/5},d_6^{1/6}])$
\STATE $\eta=\min([\alpha_2, \alpha_3, \alpha_4, \alpha_5])$
\STATE if\ {$\eta<=\theta_{20}$},  $m=20,$ quit, end if
\STATE $m=20,$ $s= \lceil\frac{1}{2}\log_2(\eta/\theta_{20})\rceil$
\STATE $q= \lceil\sqrt{m}~\rceil$
\FOR{$i=1:q$}
\STATE $A_i=A_i/4^{is}$
\ENDFOR
\ENSURE~$m,s, A_i$
\end{algorithmic}
\end{algorithm}

\section{Numerical experiments}\label{sec:4}
This section presents two numerical experiments to illustrate the performance of the new algorithm \texttt{quadphi}. The MATLAB codes of the algorithm
are available at \url{https://github.com/lidping/quadphi.git}. This routine can generate the values of oscillatory matrix functions for multiple indices simultaneously, with the computational workload increasing by only one matrix-matrix multiplication for each additional output.
 All tests are conducted in MATLAB R2020b running on a desktop equipped with an Intel Core i7 processor running at 2.1 GHz and 64 GB of RAM.

\begin{example} \label{exa1}
The experiment focuses on testing the stability of \texttt{quadphi}. It involves a set of 83 test matrices, comprising  51 $10 \times 10$
matrices obtained from the function \texttt{matrix} in the Matrix Computation Toolbox \cite{Highamtool}, together with the 32 test matrices  of sizes ranging from $2 \times 2$ to $20 \times 20$ used in \cite{AlMohy2022}. The ranges of the 2-norms and the 1-norms for these matrices span from  $4.1\cdot10^{-6}$ to $10^{17}$. We evaluate the relative error $\texttt{Error}=\|\phi_l(A)-\widehat{C}_l\|_1/\|\phi_l(A)\|_1$, where $\widehat{C}_l$ represents the computed solution. The "exact" values of the $\phi$-functions for these matrices are computed using 150-digit arithmetic via the Symbolic Math Toolbox in MATLAB.

In Fig. \ref{fig4.1} we plot eight precision diagrams illustrating the performance of \texttt{quadphi} in computing $\phi_l(A)$ for $l= 0,1,\ldots,7$, arranged from left to right. For each matrix $A$, all the eight matrix functions are computed simultaneously by a single invocation of \texttt{quadphi}.
The solid black line in each diagram represents the product of the unit roundoff \texttt{eps} and the relative condition number \texttt{cond}. The \texttt{cond} is estimated using the code \texttt{funm\underline{~}condest1} from the Matrix Function Toolbox \cite{Highamtool} with 150-digit precision using MATLAB's Symbolic Math Toolbox. For a stable algorithm the relative errors should closely follow the solid black line. It is observed that the relative error is generally remains below the solid line, indicating that \texttt{quadphi} behaves in a numerically stable manner for all matrices.
\end{example} 

\begin{example} \label{exa2} In this experiment, we aim to assess the accuracy and efficiency of our algorithm \texttt{quadphi} by solving $\phi_l(A)b$, where $b$ is a vector with all elements equal to one. We use a set of 141 matrices, each with a dimension of $n=128$. Among these, 41 matrices are generated using the MATLAB routine \texttt{matrix} from the Matrix Computation Toolbox \cite{Highamtool}, while the remaining 100 are randomly generated. Fifty percent of the randomly generated matrices are diagonalizable, with the other half being non-diagonalizable.

We calculate $\varphi_l(A)b$ by first computing $\varphi_l(A)$ using \texttt{quadphi}, then  forming the product of $\varphi_l(A)$ and $b$. We have also included a comparison with the MATLAB built-in function \texttt{ode45}. We set an absolute tolerance of $10^{-20}$ and a relative tolerance of $2.22045\cdot10^{-14}$ for \texttt{ode45} to solve the corresponding second-order initial value problem of $\phi_l(A)b$. We evaluate the relative error in the 2-norm of the computed vectors. The "exact" $\phi_l(A)b$ is computed in 150 significant decimal digit arithmetic using MATLAB's Symbolic Math Toolbox.

In Fig \ref{fig4.2}, from left to right we present the relative errors for \texttt{quadphi} and \texttt{ode45} when solving $\phi_l(A)b$, $l= 0,1,\ldots,7$, respectively, for each of the 141 test matrices. It is observed that \texttt{quadphi} generally achieves better accuracy. Specifically, compared with \texttt{ode45}, \texttt{quadphi} achieves higher accuracy for 138, 136, 134, 131, 131, 94, 132 out of the 141 matrices. Table \ref{tab2.1} displays the overall execution times of \texttt{quadphi} and \texttt{ode45} for the 141 test matrices, for each of $\phi_l(A)b$. As shown, \texttt{quadphi} notably requires less CPU time than \texttt{ode45}. 
\end{example} 

\section{Conclusion}\label{sec:5}
This paper presents an efficient method for computing general oscillatory matrix functions $\varphi_l(A)$. We have 
developed the quadruple formulas for these functions, upon which the method is constructed using the scaling and restoring technique in conjunction with truncated Taylor series. The scaling parameter and the Taylor degree are determined through forward error analysis.
 The algorithm is applicable for any matrix $A$, with its computational cost primarily driven by matrix-matrix multiplications.
A MATLAB implementation, \texttt{quadphi} based on that method has been developed and tested. Numerical experiments demonstrate that \texttt{quadphi} is stable and efficient. Future work will focus on evaluating the actions of the $\varphi$-function of large and sparse matrix on vectors.     

\section*{Acknowledgements}
 This work was supported by the Natural Science Foundation of Jilin Province (Grant No. JJKH20240999KJ) and the Natural Science Foundation of Changchun Normal University (Grant No. 2021001).

\begin{figure}[H]
\begin{minipage}{0.5\linewidth}
\centering
\includegraphics[width=6.5cm,height=5cm]{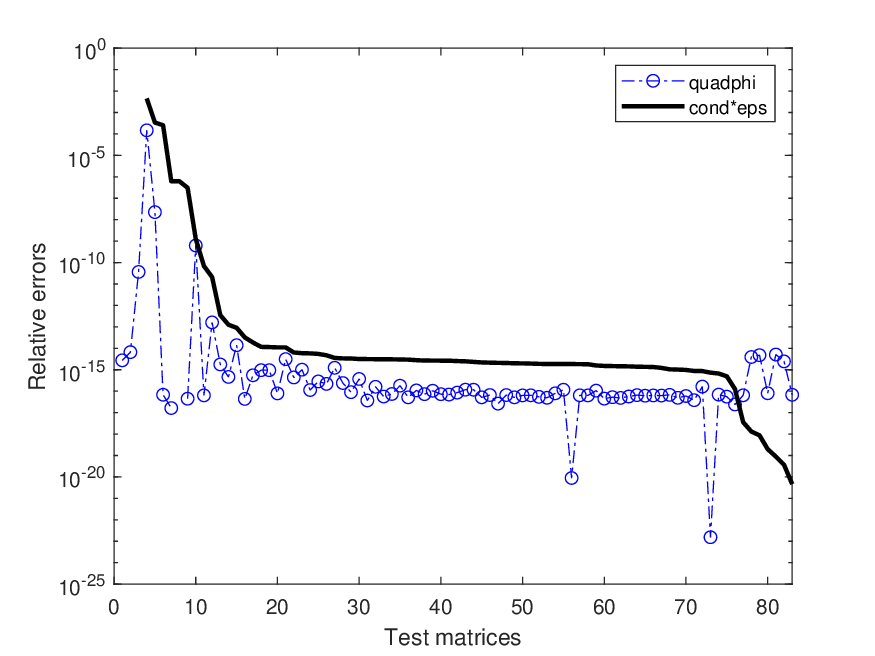}\\
\end{minipage}
\mbox{\hspace{-1.5cm}}
\begin{minipage}{0.5\linewidth}
\centering
\includegraphics[width=6.5cm,height=5cm]{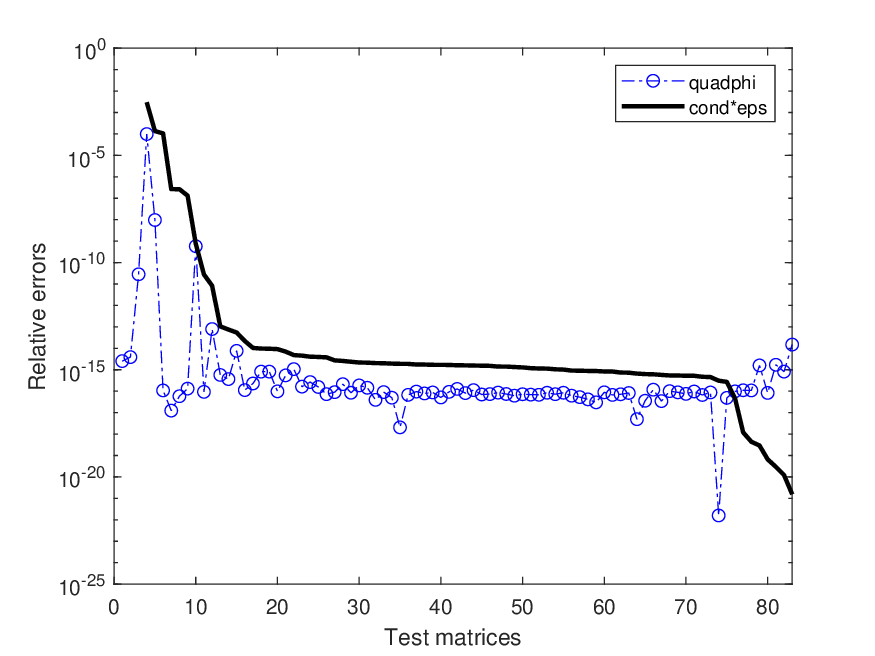}\\
\end{minipage}\\
\begin{minipage}{0.5\linewidth}
\centering
\includegraphics[width=6.5cm,height=5cm]{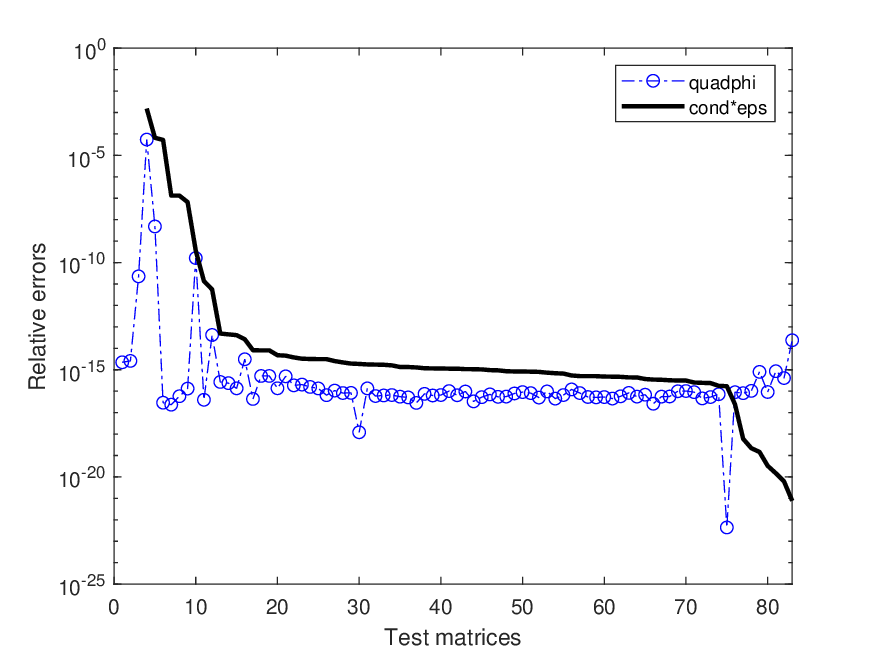}\\
\end{minipage}
\mbox{\hspace{-1.5cm}}
\begin{minipage}{0.5\linewidth}
\centering
\includegraphics[width=6.5cm,height=5cm]{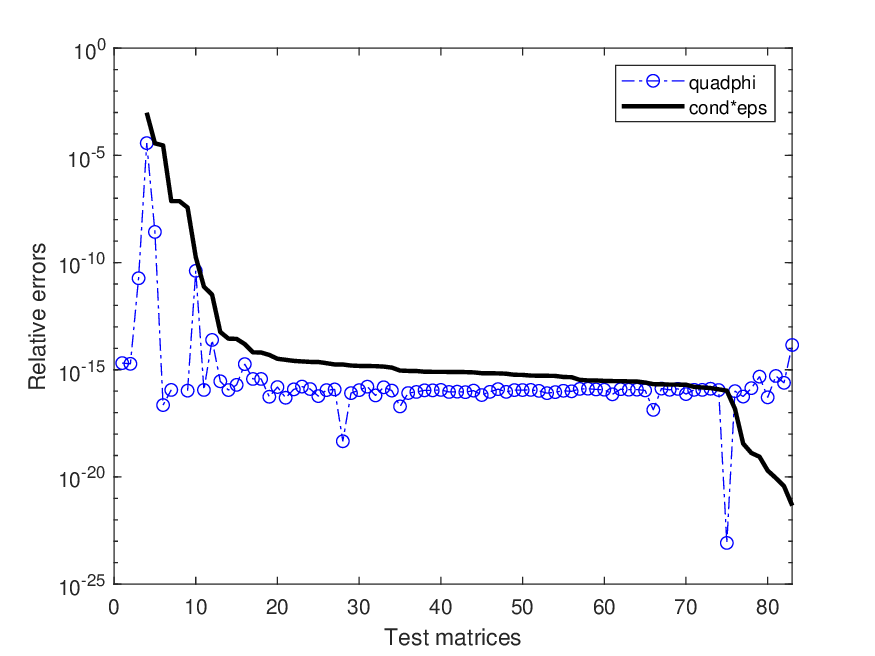}\\
\end{minipage}\\
\begin{minipage}{0.5\linewidth}
\centering
\includegraphics[width=6.5cm,height=5cm]{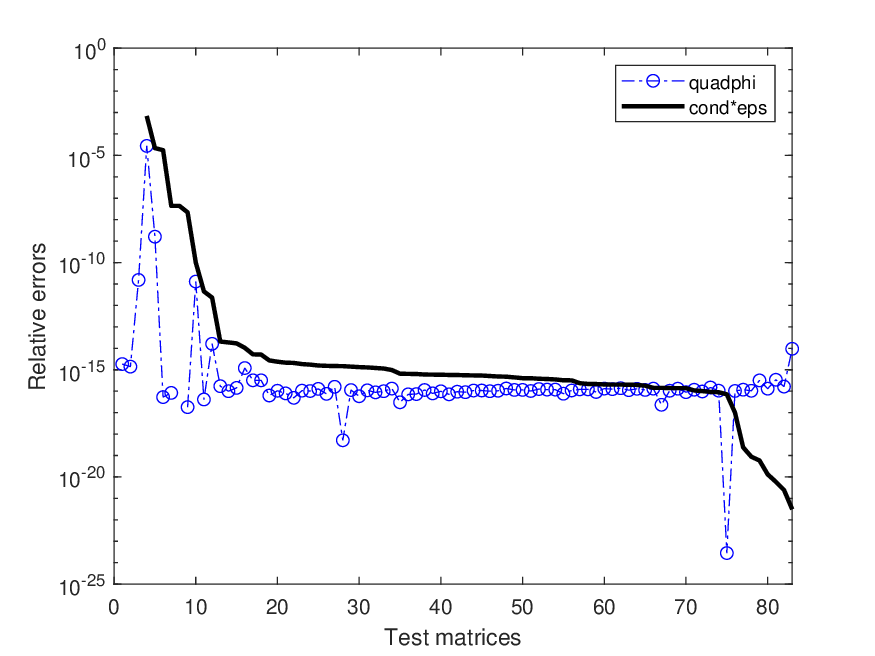}\\
\end{minipage}
\mbox{\hspace{-1.5cm}}
\begin{minipage}{0.5\linewidth}
\centering
\includegraphics[width=6.5cm,height=5cm]{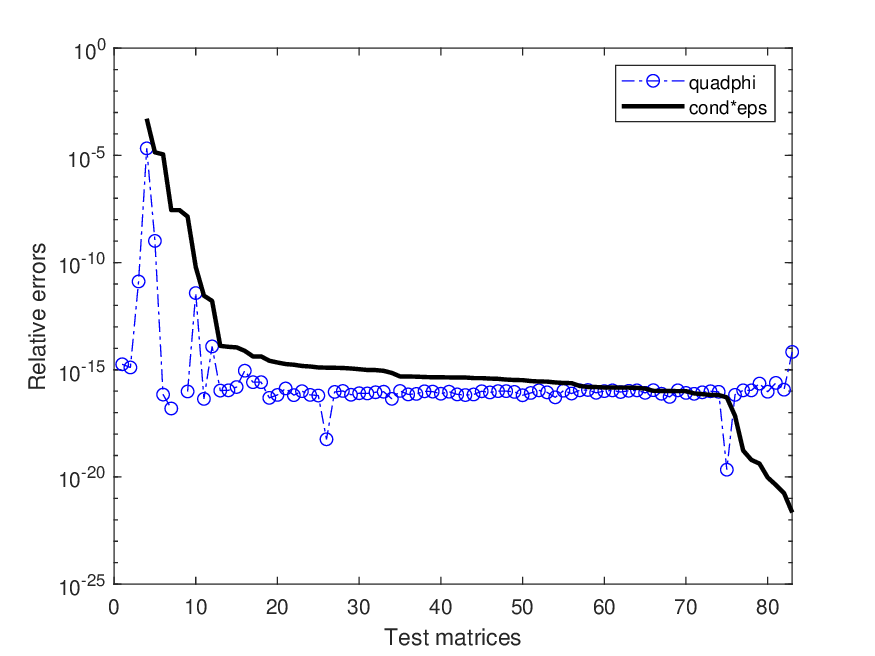}\\
\end{minipage}\\
\begin{minipage}{0.5\linewidth}
\centering
\includegraphics[width=6.5cm,height=5cm]{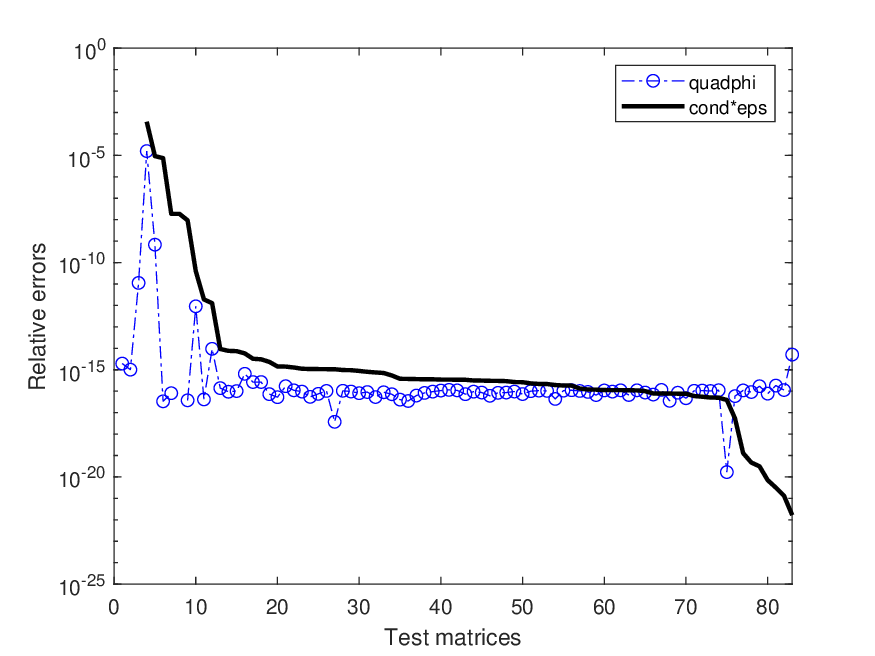}\\
\end{minipage}
\mbox{\hspace{-1.5cm}}
\begin{minipage}{0.5\linewidth}
\centering
\includegraphics[width=6.5cm,height=5cm]{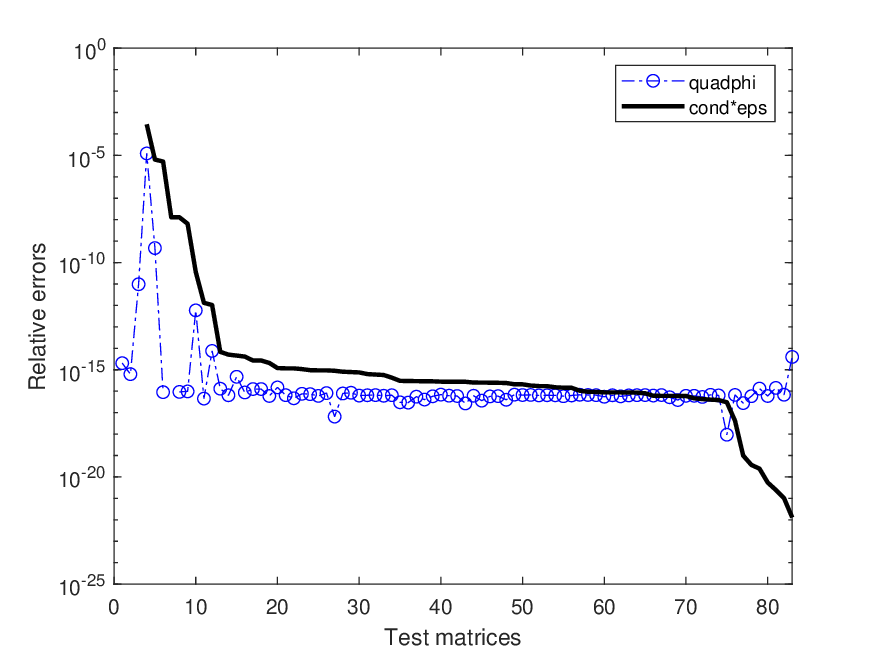}\\
\end{minipage}
\caption{Relative errors of \texttt{quadphi} for solving $\varphi_l(A)$ for $l=0, . . . , 7$ (Left to Right) of Experiment \ref{exa1}.}\label{fig4.1}
\end{figure}

\begin{figure}[H]
\begin{minipage}{0.5\linewidth}
\centering
\includegraphics[width=6.5cm,height=5cm]{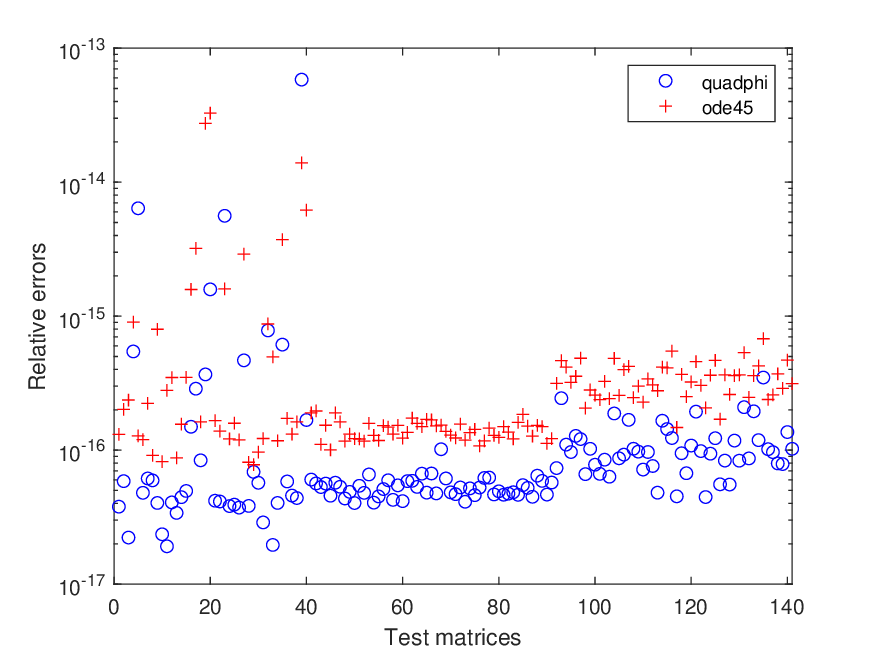}\\
\end{minipage}
\mbox{\hspace{-1.5cm}}
\begin{minipage}{0.5\linewidth}
\centering
\includegraphics[width=6.5cm,height=5cm]{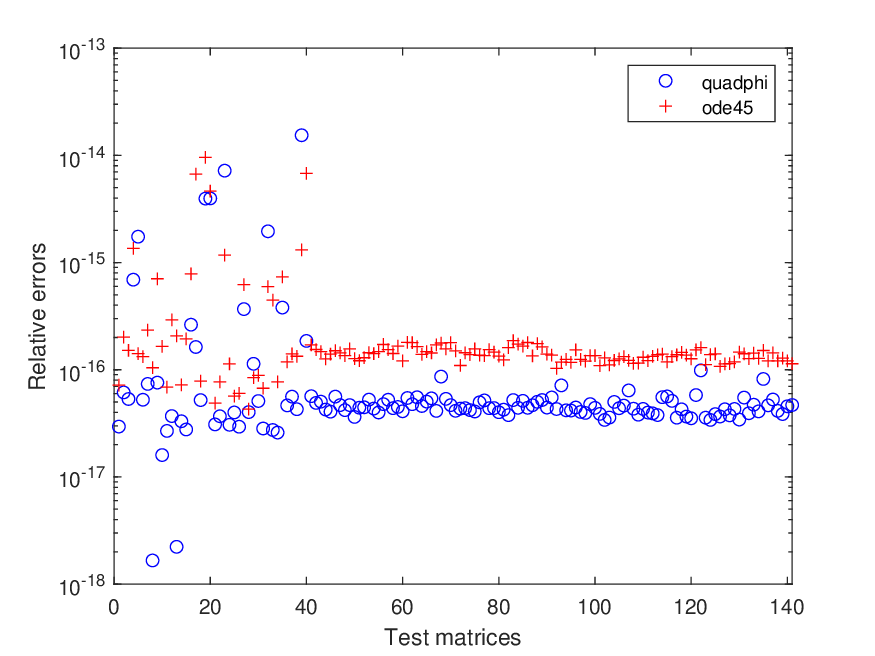}\\
\end{minipage}\\
\begin{minipage}{0.5\linewidth}
\centering
\includegraphics[width=6.5cm,height=5cm]{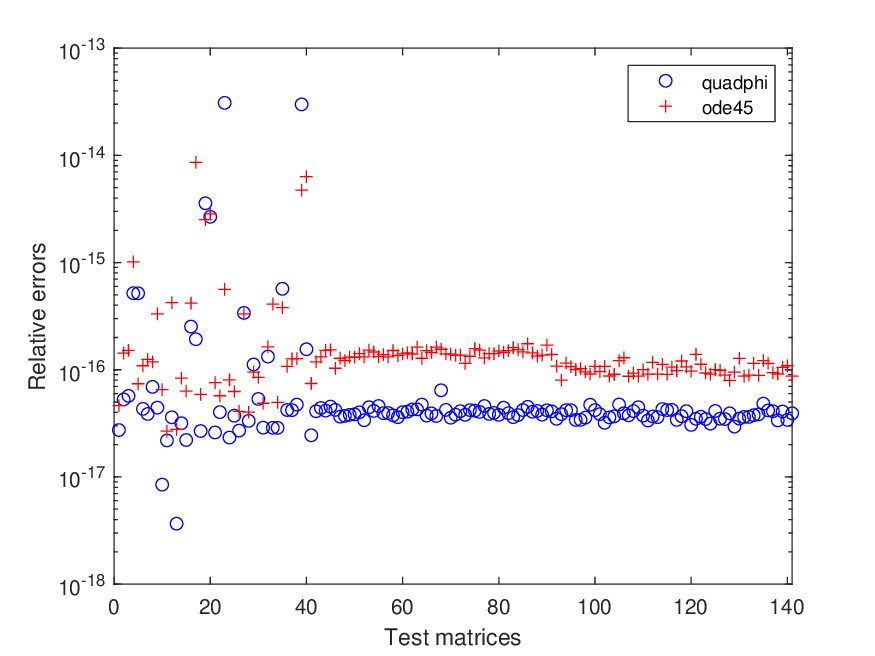}\\
\end{minipage}
\mbox{\hspace{-1.5cm}}
\begin{minipage}{0.5\linewidth}
\centering
\includegraphics[width=6.5cm,height=5cm]{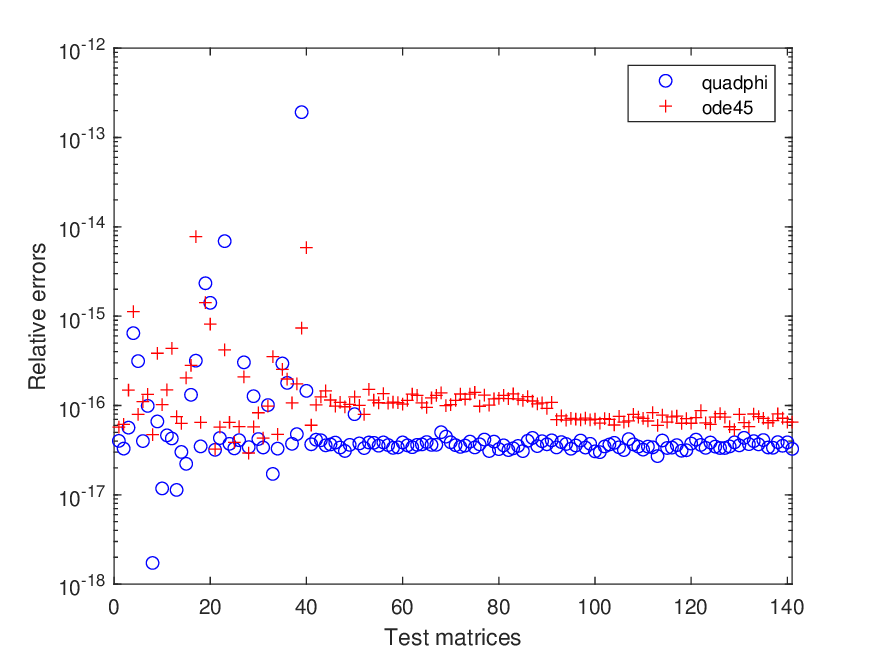}\\
\end{minipage}\\
\begin{minipage}{0.5\linewidth}
\centering
\includegraphics[width=6.5cm,height=5cm]{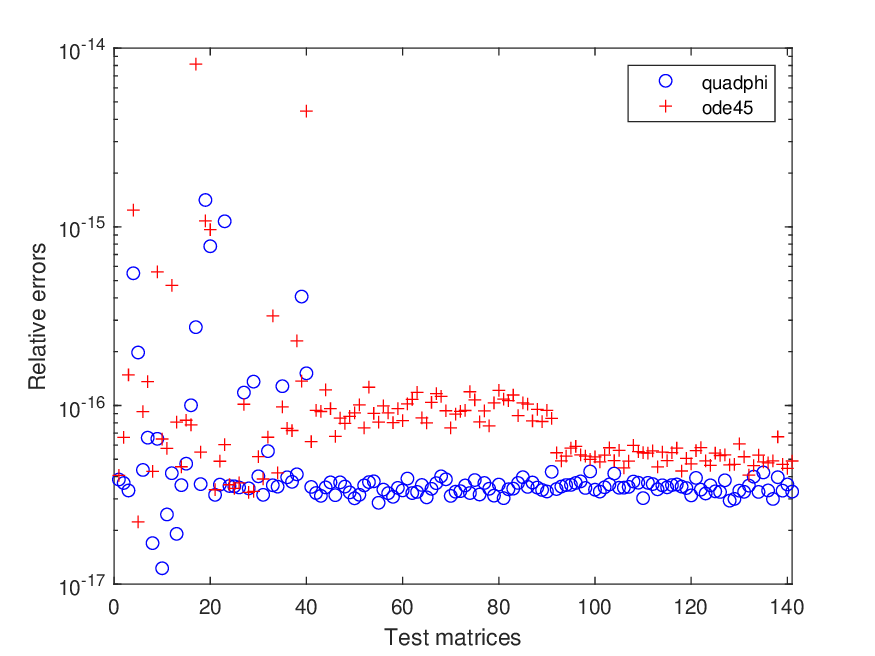}\\
\end{minipage}
\mbox{\hspace{-1.5cm}}
\begin{minipage}{0.5\linewidth}
\centering
\includegraphics[width=6.5cm,height=5cm]{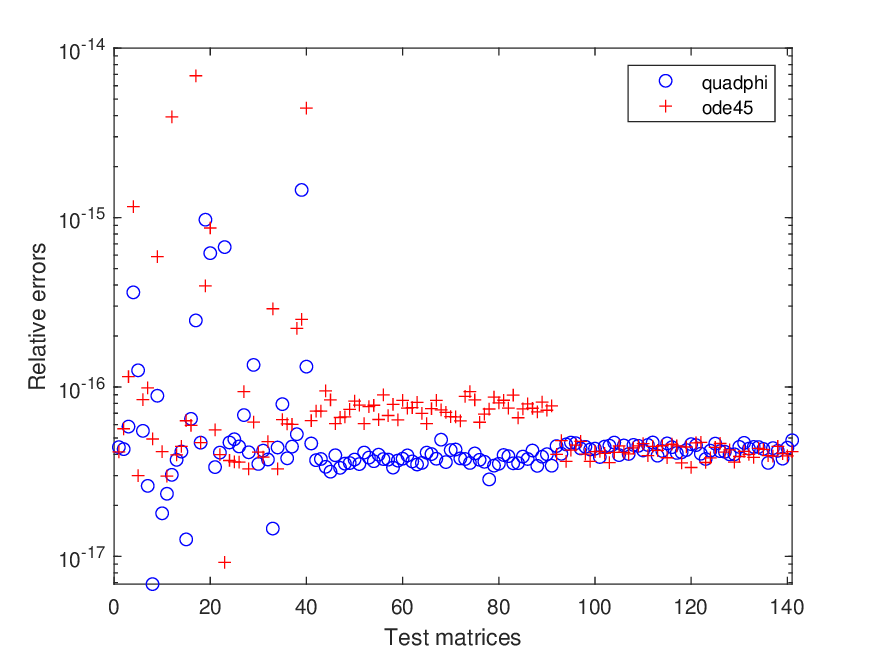}\\
\end{minipage}\\
\begin{minipage}{0.5\linewidth}
\centering
\includegraphics[width=6.5cm,height=5cm]{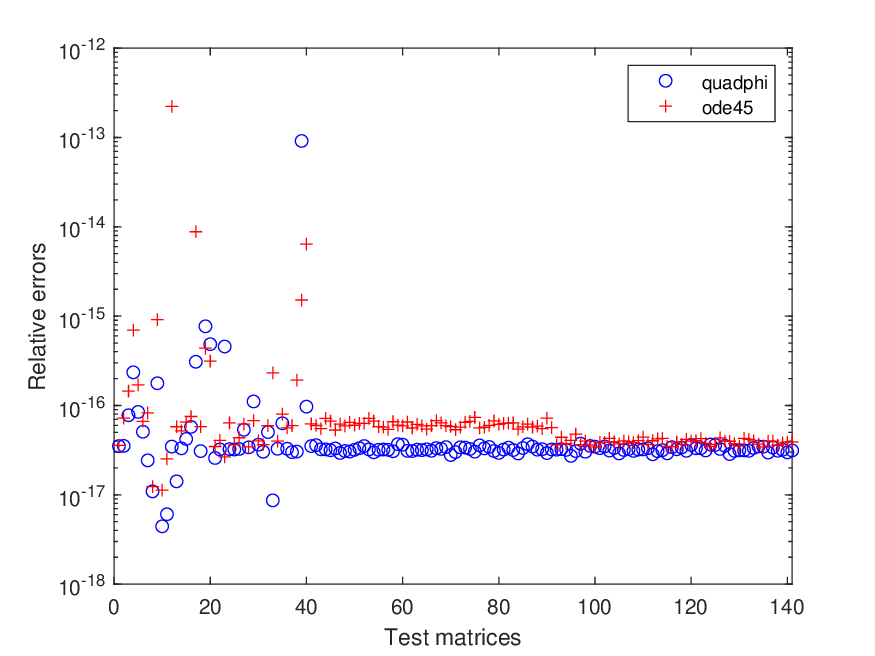}\\
\end{minipage}
\mbox{\hspace{-1.5cm}}
\begin{minipage}{0.5\linewidth}
\centering
\includegraphics[width=6.5cm,height=5cm]{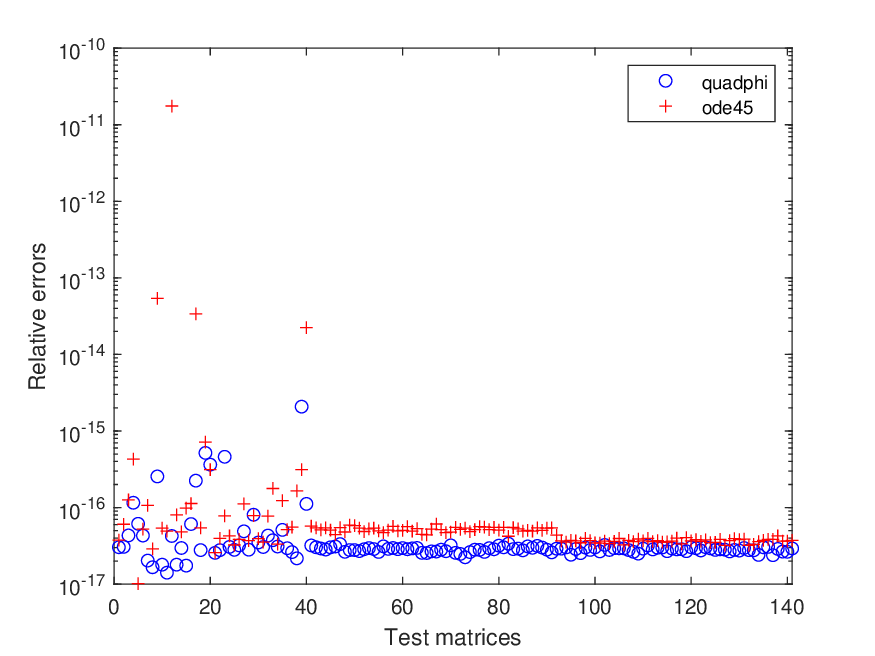}\\
\end{minipage}
\caption{ Relative errors of \texttt{quadphi} and \texttt{ode45} for solving $\phi_l(A)b$ for $l=0, . . . , 7$ (Left to Right) of Experiment \ref{exa2}.}\label{fig4.2}
\end{figure}

\begin{table}[htbp]
\setlength{\abovecaptionskip}{0.cm}
\setlength{\belowcaptionskip}{-0.3cm}
\caption{Execution times of \texttt{quadphi} and \texttt{ode45} for solving $\phi_l(A)b$ for $l=0, . . . , 7$  of Experiment \ref{exa2}.}\label{tab2.1}
\begin{center}
\begin{tabular*}{\textwidth}{@{\extracolsep{\fill}}@{~}c|ccccccccccc}
\toprule%
$l$ & $0$& $1$& $2$& $3$& $4$& $5$& $6$ & $7$ \\
  \hline
  \texttt{quadphi} &0.37   &0.46   &0.31   &0.45  &0.30   &0.32 &0.34   &0.36\\
 \hline
 \texttt{ode45}&84.42  &60.94  &65.60  &77.62  &68.90  &77.58 &71.77  &64.17\\
\bottomrule
\end{tabular*}
\end{center}
\end{table}

\bibliographystyle{elsarticle-num}
\bibliography{reference}

\end{document}